\pdfoutput=1
\documentclass[oneside,a4paper,12pt]{amsart}

\expandafter\let\csname ver@amsthm.sty\endcsname\relax
\let\theoremstyle\relax

\usepackage[utf8]{inputenc}
\usepackage[T1]{fontenc}
\usepackage[english]{babel}
\usepackage{amsmath, amsfonts, amssymb, color}
\usepackage{lmodern}
\usepackage{mathtools}
\usepackage{enumitem}
\usepackage{blkarray}
\usepackage{ytableau}
\usepackage{ifthen}
\usepackage[pdftex,hidelinks,bookmarks=true]{hyperref}
\usepackage{amsthm}
\usepackage[english]{cleveref}
\usepackage{comment}
\usepackage[babel]{csquotes}
\usepackage[left=3cm,right=3cm,bottom=3cm]{geometry}

\makeatletter
\let\xx@thm\@thm
\AtBeginDocument{\let\@thm\xx@thm}
\makeatother

\usepackage[scaled=0.86]{berasans}
\usepackage[sc,osf]{mathpazo}

\usepackage{todonotes}


\usepackage[all,2cell]{xy}  
\UseAllTwocells
\SelectTips{eu}{10}

\usepackage{setspace}
\setstretch{1.09}
\setdisplayskipstretch{.6}


\newtheorem{lemma}{Lemma}
\newtheorem{theorem}    [lemma]{Theorem}

\newtheorem{proposition}[lemma]{Proposition}
\theoremstyle{definition}

\newtheorem{remark}     [lemma]{Remark}
\newtheorem*{remark*}          {Remark}

\newtheorem*{question*}        {Question}

\crefname{lemma}{Lemma}{Lemma}
\crefname{claim}{Claim}{Claim}
\crefname{theorem}{Theorem}{Theorem}
\crefname{corollary}{Corollary}{Corollary}
\crefname{fact}{Fact}{Fact}
\crefname{proposition}{Proposition}{Proposition}
\crefname{definition}{definition}{definition}
\crefname{remark}{remark}{remark}
\crefname{question}{question}{question}
\crefname{example}{example}{example}
\crefname{section}{Section}{Section}
\crefformat{equation}{(#2#1#3)}
\crefformat{enumi}{(#2#1#3)}

\def\cftry#1#2#3{\expandafter\def\csname #1#3\endcsname{{\csname #2\endcsname{#3}}}}

\def\cfH#1{\ifx#1\cfH\else\cftry{H}{mathbb}#1\expandafter\cfH\fi}
\cfH ABCDEFGHIJKLMNOPQRSTUVWXYZ\cfH
\def\cftd#1{\ifx#1\cftd\else\cftry{td}{tilde}#1\expandafter\cftd\fi}
\cftd ABCDEFGHIJKLMNOPQRSTUVWXYZabcdefghijklmnopqrstuvwxyz\cftd
\def\cfrsd#1{\ifx#1\cfrsd\else\cftry{rsd}{bar}#1\expandafter\cfrsd\fi}
\cfrsd ABCDEFGHIJKLMNOPQRSTUVWXYZabcdefghijklmnopqrstuvwxyz\cfrsd
\def\cfcl#1{\ifx#1\cfcl\else\cftry{cl}{mathcal}#1\expandafter\cfcl\fi}
\cfcl ABCDEFGHIJKLMNOPQRSTUVWXYZabcdefghijklmnopqrstuvwxyz\cfcl
\def\cfkk#1{\ifx#1\cfkk\else\cftry{k}{mathfrak}#1\expandafter\cfkk\fi}
\cfkk ABCDEFGHIJKLMNOPQRSTUVWXYZabcdefghijklmnopqrstuvwxyz\cfkk
\def\cfht#1{\ifx#1\cfht\else\cftry{ht}{hat}#1\expandafter\cfht\fi}
\cfht ABCDEFGHIJKLMNOPQRSTUVWXYZabcdefghijklmnopqrstuvwxyz\cfht
\def\cful#1{\ifx#1\cful\else\cftry{ul}{underline}#1\expandafter\cful\fi}
\cful ABCDEFGHIJKLMNOPQRSTUVWXYZabcdefghijklmnopqrstuvwxyz\cful
\def\cfbf#1{\ifx#1\cfbf\else\cftry{bf}{mathbf}#1\expandafter\cfbf\fi}
\cfbf ABCDEFGHIJKLMNOPQRSTUVWXYZabcdefghijklmnopqrstuvwxyz\cfbf

 
\newcommand{\df}{\coloneqq}

\DeclareMathOperator{\End}{End}

\DeclareMathOperator{\GL}{GL}

\makeatletter\let\DH\@undefined\makeatother 
\DeclareMathOperator{\DH}{DH}

\DeclareMathOperator{\Sym}{Sym}

\newcommand{\trans}[1]{#1^{\mathrm T}}

\DeclareMathOperator{\diag}{diag}

\let\tmpdet\det\renewcommand{\det}{\tmpdet\nolimits} 
\let\tmpcirc\circ\renewcommand{\circ}{\mathop{\tmpcirc}}

\makeatletter
\def\ifempty#1{\def\@temp{#1}\ifx\@temp\@empty} 
\makeatother
\newcommand{\ifnonempty}[2]{\ifempty{#1}\else#2\fi}
\newcommand{\enclspacing}{}

\newcommand{\cenclose}[7][auto]{%
\ifempty{#1} %
 \ifnonempty{#2}{#2\enclspacing}#3 %
 \ifnonempty{#4}{#7#4#7}#5%
 \ifnonempty{#6}{\enclspacing#6}%
\else\ifthenelse{\equal{#1}{auto}}{%
 \ifthenelse{\equal{#2}{}}{\left.}{\left#2}\enclspacing#3%
 \ifthenelse{\equal{#4}{}}{}{#7\middle #4#7}#5%
 \enclspacing\ifthenelse{\equal{#6}{}}{\right.}{\right#6}
}{
 \ifthenelse{\equal{#2}{}}{}{\csname#1l\endcsname#2}\enclspacing#3%
 \ifthenelse{\equal{#4}{}}{}{#7\csname#1\endcsname#4#7}#5%
 \enclspacing\ifthenelse{\equal{#6}{}}{}{\csname#1r\endcsname#6}
}\fi}
\newcommand{\set}[2][auto]{\enclose[#1]\{{#2}\}}

\newcommand{\abs}[2][auto]{\enclose[#1]|{#2}|}

\newcommand{\of} [2][auto]{\enclose[#1]({#2})}

\newcommand{\enclose}[4][auto]{\cenclose[#1]{#2}{#3}{}{}{#4}{}}

\newcommand{\cset}[3][auto]{\cenclose[#1]\{{#2}\lvert{#3}\}\:}

\newcommand{\dotter}[3][]{#2#3\ifthenelse{\equal{#1}{}}{}{\widehat{#1}#3}#2}
\newcommand{\pts}[2][]{\dotter[#1]{#2}{\cdots}}
\newcommand{\dts}[2][]{\dotter[#1]{#2}{\ldots}}

\newcommand{\mx}[2][r]{\begin{pmatrix*}[#1]#2\end{pmatrix*}}

\newcommand{\longdashrightarrow}{\mathrel{
\longrightarrow\kern-16pt%
{\color{white}\rule[1.6pt]{1.6pt}{1.6pt}}\kern2pt%
{\color{white}\rule[1.6pt]{1.6pt}{1.6pt}}\kern2pt%
{\color{white}\rule[1.6pt]{1.6pt}{1.6pt}}\kern6pt}}
\newcommand{\cmmt}[3][\circlearrowleft]{\ar@{}[#2]|#3{#1}}
\newcommand{\prar}{\ar@{->>}}
\newcommand{\dtar}{\ar@{..>}}
\newcommand{\dshar}{\ar@{-->}}
\newcommand{\nums}[2][]{\ifempty{#1}[#2]\else[#1,#2]\fi}

\setcounter{tocdepth}{1}

\let\module\HV 
\DeclareMathOperator{\esMat}{MESP}
\DeclareMathOperator{\esVec}{ESP}
\DeclareMathOperator{\weight}{wt}

\newcommand{\amsfix}{%
\let\qedold\qedsymbol%
\renewcommand{\qedsymbol}{$\Box$}\qed%
\renewcommand{\qedsymbol}{\qedold}}

\title{The Stabilizer of Elementary Symmetric Polynomials}
\author[J.~Hüttenhain]{Jesko Hüttenhain}
\address{
  Technische Universität Berlin\\
  Straße des 17. Juni 136\\
  10623 Berlin\\
  Germany
}
\email[J.~Hüttenhain]{ jesko@math.tu-berlin.de }
\keywords{geometric complexity theory, elementary symmetric polynomials}

\linespread{1.1}
\setlength{\skip\footins}{\baselineskip}

\begin{document}

\begin{abstract}
We study the $r$-th elementary symmetric polynomial in $n$ variables with $2<r<n$. There are two kinds of linear transformations on the parameter space that leave this polynomial invariant: Namely, any permutation of the variables and simultaneous scaling by any $r$-th root of unity. We prove that there are no other linear transformations with this property. 
\end{abstract}

\maketitle \vspace{-3.5ex}

\section{Introduction} 

\subsection{Outline} Let $n\in\HN$ and write $\nums n \df \set{1\dts,n}$. The number $n$ will be fixed throughout the document and does not enter notation. We work over the field~$\HC$ of complex numbers throughout. The $r$-th elementary symmetric polynomial for $1\le r\le n$ is defined as
\[
\esVec_r(X_1\dts,X_n) = \sum\nolimits_{\substack{I\subseteq\nums{n}\\\abs I = r}} \prod_{i\in I} X_i \in \HC[X_1\dts,X_n]_r .
\]
Here, $\HC[X_1\dts,X_n]_r$ is the $\HC$-vector space of homogeneous degree $r$ polynomials in $n$ variables.
The elementary symmetric polynomials have been studied in many contexts and provide good training grounds for questions in algebraic complexity theory, especially since much is known about their computational complexity already \cite{STRASSEN197521,ESYM-LOWERBOUND,ESYM-LOWERBOUND-2}. We are interested in geometric complexity theory \cite{MulSoh01} here, or GCT for short. See \cite[pp~28-29]{La14} for a survey on how the elementary symmetric polynomials fit in the context of GCT. For studying polynomial families in the GCT framework, a primary object of interest are their stabilizer groups. Quite surprisingly, it has not been documented for the elementary symmetric polynomials yet -- the primary goal of this note is to fill this gap. As one application, we also study the weights that occur in the coordinate ring of the orbit closure of $\esVec_r$. 

Consider the group $\GL_n\df\GL(\HC^n)$ acting on $\HC^n$ and thereby canonically on the polynomial ring $\HC[X_1\dts,X_n]=\Sym(\HC^n)^\ast$. We are interested in the stabilizer
\[
H_r \df \cset{ h\in \GL_n }{ \esVec_r \circ h = \esVec_r } \subseteq \GL_n
\]
of $\esVec_r$. Our main result is the following statement, which tells us that the stabilizer contains no more than the obvious symmetries:
\begin{theorem}
\label{stab}
Let $2< r< n$. Then, the stabilizer $H_r$ of the $r$-th elementary symmetric polynomial satisfies
$H_r\cong\kS_n\rtimes\HZ_r$, where $\kS_n$ corresponds to the permutation matrices and $\HZ_r=\cset{\omega\in\HC}{\omega^r=1}$ corresponds to the scalar matrices corresponding to some $r$-th root of unity.
\end{theorem}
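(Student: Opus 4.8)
The plan is as follows. First, it's clear that $\kS_n\rtimes\HZ_r \subseteq H_r$: permuting the variables fixes $\esVec_r$, and scaling all variables by an $r$-th root of unity $\omega$ multiplies $\esVec_r$ (which is homogeneous of degree $r$) by $\omega^r = 1$. So the content is the reverse inclusion. Let me think about how to pin down an arbitrary $h \in H_r$.

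First I would differentiate. Since $\esVec_r$ is invariant under $h$, so are all its partial derivatives in a suitable sense; more precisely, the singular locus $\mathrm{Sing}(\Zeros(\esVec_r))$ and all the higher "polar" loci are $h$-stable. A cleaner handle: the space of first partials $\partial \esVec_r / \partial X_i = \esVec_{r-1}(X_1,\dots,\widehat{X_i},\dots,X_n)$ spans an $n$-dimensional subspace $W \subseteq \HC[X]_{r-1}$ (they are linearly independent for $r < n$), and $h$ acts on $W$; moreover the action on $W$ is the transpose-inverse-twisted version of the action on the $X_i$. The idea is to set up an induction, or a descent, that reduces understanding $H_r$ to understanding $H_{r-1}$ — but the base case $r=2$ is genuinely different (hence the hypothesis $r>2$), so the real work is to show that for $r \geq 3$ the stabilizer cannot be larger than the "naive" one even though $\esVec_2$ has a huge stabilizer (an orthogonal-type group).

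The key structural step I would aim for is to show that $h$ must be \emph{monomial}, i.e.\ a permutation matrix times a diagonal matrix. One route: analyze the decomposition of $\esVec_r$ into irreducible factors after restricting to coordinate subspaces, or use the geometry of the hypersurface $\Zeros(\esVec_r)$ — its singular locus, which is cut out by the $\esVec_{r-1}$'s, is a union of linear subspaces (the loci where two variables are equal to zero, roughly), and $h$ permutes the irreducible components of that locus. Tracking how $h$ permutes these coordinate-type subspaces should force $h$ to permute the coordinate hyperplanes $\{X_i = 0\}$, hence $h$ is monomial. Once $h = P D$ with $P$ a permutation and $D = \diag(d_1,\dots,d_n)$, the equation $\esVec_r \circ h = \esVec_r$ becomes: for every $r$-subset $I$, the coefficient $\prod_{i \in I} d_i$ equals $1$. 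Taking ratios of two such relations that differ in one element gives $d_i = d_j$ for all $i,j$ (here $2 < r < n$ is used to guarantee enough overlapping $r$-subsets), so $D = \lambda I$ with $\lambda^r = 1$, giving exactly $\HZ_r$.

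The main obstacle I expect is the monomiality step: ruling out the "exotic" linear transformations. For $\esVec_2$ the stabilizer is much bigger, so any argument must genuinely exploit $r \geq 3$ — presumably through a more refined invariant of the hypersurface or its derivatives (e.g.\ the rank of the Hessian, or the structure of the second polar locus, or an eigenvalue/weight argument for the torus $(\HC^\times)^n$ of diagonal matrices normalized by $h$). A plausible clean approach: show the diagonal torus $T \subseteq \GL_n$ meets $H_r$ only in $\HZ_r$, and that $h H_r^\circ$ normalizes a maximal torus, so $h$ lies in the normalizer $N(T)$, which consists precisely of the monomial matrices; combined with the coefficient computation above, this closes the argument. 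Verifying that the identity component $H_r^\circ$ is trivial (equivalently, that the Lie algebra annihilator of $\esVec_r$ is zero for $r \geq 3$) is itself a concrete finite computation and is likely where the hypothesis $r > 2$ enters most transparently.
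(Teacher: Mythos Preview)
Your overall architecture is right and matches the paper: reduce to showing that any $h\in H_r$ is monomial, then compare coefficients to force $h$ to be a permutation times a scalar $r$-th root of unity. Your coefficient step (taking two $r$-subsets differing in one index to get $d_i=d_j$, then $\omega^r=1$) is exactly what the paper does at the end.

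The gap is in the monomiality step. Your torus/normalizer route does not work as stated: even if you verify that the Lie algebra annihilator of $\esVec_r$ vanishes and hence $H_r$ is finite, there is no mechanism forcing an arbitrary element of a finite subgroup of $\GL_n$ to normalize the diagonal torus $T$. Finite subgroups of $\GL_n$ need not be monomial, and nothing in your sketch explains why $h$ should lie in $N(T)$. The singular-locus idea is more promising but is left vague; making it precise would require identifying the irreducible components and showing $h$ permutes the coordinate hyperplanes, which is nontrivial bookkeeping.

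The paper's device for monomiality is different from any of your suggestions and worth noting. For $a,b\in\HC^n$ set $f_{a,b}(X)=\esVec_r(Xa+b)\in\HC[X]$. The key lemma is that $\deg f_{a,b}\le 1$ for \emph{all} $b$ if and only if $a$ has at most one nonzero entry. This degree condition is manifestly $H_r$-invariant (since $f_{g(a),b}=f_{a,g^{-1}(b)}$), so $H_r$ preserves the set of vectors with exactly one nonzero entry; hence each $g(e_i)$ has a single nonzero entry and $g$ is monomial. The hypothesis $r>2$ enters in the proof of the ``only if'' direction of this lemma, where one extracts from the vanishing of higher coefficients a system of elementary-symmetric relations and argues (with separate treatment of $r=3$ and $r\ge 4$) that they force $\rho(a)\le 1$. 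This one-variable-restriction trick is the idea your proposal is missing.
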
 

\subsection{The Matrix Version} For a matrix $x\in\HC^{n\times n}$, denote by $\esMat_r(x)$ the \mbox{$r$-th} elementary symmetric polynomial in the eigenvalues of~$x$. Note that this is a polynomial function in~$x$, because $\esMat_{n-r}(x)$ is (up to sign) the~$r$-th coefficient of the characteristic polynomial of $x$. From the very late 1950's to the late 1970's, several papers were published to classify those linear transformations in $\End(\HC^{n\times n})$ which leave these coefficients invariant. In \cite{case4} (for the cases $4\le r\le n-1$) and in \cite{case3} (for the case $r=3$), the following main classification is proved:

\begin{theorem}[Marcus-Purves-Beasley] \label{mpb} Let $2<r<n$ and $g\in\End(\HC^{n\times n})$. If $\esMat_r\circ g=\esMat_r$, then there exist $u\in\GL_n$ and $\omega\in\HC$ with $\omega^r=1$ such that one of the following holds:
\begin{itemize}[nolistsep,leftmargin=4ex]
\item $\forall x\in\HC^{n\times n}\colon~g(x) = \omega\cdot uxu^{-1}$.
\item $\forall x\in\HC^{n\times n}\colon~g(x) = \omega\cdot u\trans x u^{-1}$.
\qed 
\end{itemize} 
\end{theorem}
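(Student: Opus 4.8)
This is a classical \emph{linear preserver} problem, and the plan is to wring enough rigidity out of the single polynomial identity $\esMat_r\of{g(x)}=\esMat_r(x)$ to force $g$ into a Marcus–Moyls normal form, and then to normalise the remaining scalar freedom. Throughout I would use that $\esMat_r(x)$, being up to sign a coefficient of the characteristic polynomial, equals the sum of the principal $r\times r$ minors of $x$; in particular $\esMat_r(x)=0$ whenever $\rank x\le r-1$.

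First I would check that $g$ is invertible: if $0\ne x_0\in\ker g$, then $\esMat_r(x+tx_0)=\esMat_r\of{g(x+tx_0)}=\esMat_r\of{g(x)}$ is independent of $t$ for every $x$, which forces the directional derivative $\partial_{x_0}\esMat_r$ to vanish identically -- impossible for the nonconstant homogeneous polynomial $\esMat_r$. Writing $D_k\df\cset{x\in\HC^{n\times n}}{\rank x\le k}$ for the determinantal varieties, the invariance makes $g$ a linear automorphism of the hypersurface $Z\df\Zeros(\esMat_r)$, and the geometric heart of the argument is to show that $g$ preserves the rank stratification, i.e.\ that $g(D_k)=D_k$ for $k\le r-1$. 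The idea is that the deeper singularities of $Z$ detect the low-rank loci $D_k$ -- each $D_k$ is recovered as the singular locus of $D_{k+1}$ -- so that the linear automorphism $g$, which preserves $Z$, must preserve every $D_k$, and in particular the cone $D_1$ of matrices of rank at most one.

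Once $g$ is known to carry rank-one matrices bijectively to rank-one matrices, I would invoke the classical rank-one preserver theorem of Marcus–Moyls, which produces $u,v\in\GL_n$ with either $g(x)=uxv$ for all $x$ or $g(x)=u\trans x v$ for all $x$; substituting back is then the routine part. In the first case $uxv$ is similar to $x(vu)$, so with $w\df vu$ the invariance becomes $\esMat_r(xw)=\esMat_r(x)$ for all $x$ (the transpose case reduces symmetrically to the same identity with $\trans w$ in place of $w$). Evaluating on the family $x=P\diag(s_1\dts,s_n)P^{-1}$, where $P$ brings $w$ to upper-triangular form with diagonal entries $\mu_1\dts,\mu_n$, makes $xw$ upper-triangular with diagonal $s_1\mu_1\dts,s_n\mu_n$, so the identity reads $\esVec_r(s_1\mu_1\dts,s_n\mu_n)=\esVec_r(s_1\dts,s_n)$; matching the coefficient of each squarefree monomial $s_{i_1}\cdots s_{i_r}$ forces $\mu_{i_1}\cdots\mu_{i_r}=1$ over every $r$-subset, hence all $\mu_i$ equal a common $\omega$ with $\omega^r=1$. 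A short infinitesimal argument -- differentiating the relation $\esMat_r(xw^{\,k})=\esMat_r(x)$ (extended to a continuous exponent) in the unipotent direction and using nondegeneracy of the resulting trace pairing against the adjugate of $I+sx$ -- then rules out a nontrivial unipotent part and gives $w=\omega I$, i.e.\ $v=\omega u^{-1}$. The two forms collapse to $g(x)=\omega\,uxu^{-1}$ and $g(x)=\omega\,u\trans x u^{-1}$, as claimed.

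The hard part is the first step: converting the \emph{single} scalar identity $\esMat_r\circ g=\esMat_r$ into honest preservation of the rank strata $D_k$. This is precisely the technical core, and it is also where the hypotheses split -- the generic incidence-and-singularity analysis that works for $4\le r\le n-1$ degenerates at the boundary value $r=3$ and must be replaced by an ad hoc argument, which is exactly why the theorem is proved in the two separate papers \cite{case4} and \cite{case3}. By contrast, the Marcus–Moyls reduction and the final scalar normalisation are essentially formal once the rank-one structure is available.
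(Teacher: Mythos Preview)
The paper does not prove \cref{mpb} at all: it is stated as a quoted result from the literature (note the \texttt{\textbackslash qed} at the end of the statement), with the cases $4\le r\le n-1$ attributed to \cite{case4} and the case $r=3$ to \cite{case3}. So there is nothing to compare your proposal against in this paper.

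That said, your sketch is broadly in the right spirit, and the paper's proof of the \emph{vector} analogue (\cref{stab}) gives a hint about how the cited matrix proof actually goes. The paper establishes rank-one preservation not through the singular stratification of $\Zeros(\esMat_r)$, as you propose, but via a polynomial-degree criterion: $\rho(a)\le 1$ is characterised by $\deg_X \esVec_r(Xa+b)\le 1$ for all $b$ (\cref{stab:ranklemma}), and the paper explicitly says this is the analogue of \cite[Lemma~3.3,~3.5]{case4}. So in the original Marcus--Purves argument the rank-one step is a direct computation of this flavour, not a singularity analysis; your geometric reformulation may well work, but it is not how the references do it, and you would have to check carefully that the singular locus of $\Zeros(\esMat_r)$ really recovers $D_{r-1}$ (remember $\Zeros(\esMat_r)$ is strictly larger than $D_{r-1}$).

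Two further soft spots in your outline: (i) in the normalisation step your choice of $x=P\diag(s)P^{-1}$ does not make $xw$ upper-triangular; you should instead conjugate the identity $\esMat_r(xw)=\esMat_r(x)$ by $P$ first to replace $w$ by its triangular form, and then plug in diagonal $x$; (ii) the argument that kills the unipotent part of $w$ (``differentiating in the unipotent direction and using nondegeneracy of the trace pairing'') is too vague to count as a proof and would need to be made precise.
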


\begin{remark} \label{induced}
\Cref{stab} can now also be interpreted as stating that every element in the stabilizer of $\esVec_r$ is induced by an element in the stabilizer of $\esMat_r$: The action of $H_r$ on $\HC^n$ corresponds to the action of $H_r$ on the space of diagonal matrices via conjugation. Hence, an element $u\in H_r$ corresponds to a map $x\mapsto uxu^{-1}$ leaving diagonal matrices invariant. 
\end{remark}

Unfortunately, one can not deduce \cref{stab} directly from \cref{mpb}. However, the proof of \cref{stab} we give here is based strongly on the ideas from \cite{case4} which were used to prove (part of) \cref{mpb}.

\subsection{Acknowledgement} I am grateful to Peter Bürgisser for bringing this problem to my attention and for his many helpful comments.

\section{The Stabilizer of Elementary Symmetric Polynomials}

For $a\in\HC^n$, we define $\rho(a)$ to be the number of nonzero entries of $a$.
Intuitively, if we imagine~$a$ as a diagonal matrix, $\rho(a)$ is the rank of $a$. Note that \cref{stab:ranklemma,stab:rank1preserved} are analogons of \cite[Lemma~3.3,~3.5]{case4} with very similar proofs.

\begin{lemma} \label{stab:ranklemma} Let $2<r<n$ and $a,b\in\HC^n$. Define the polynomial 
\[ f_{a,b}\df\esVec_r(Xa+b)\df\esVec_r(Xa_1+b_1\dts,Xa_n+b_n)\in\HC[X] \]
Then, we have 
$\deg(f_{a,b})\le 1$ for all $b\in\HC^n$ if and only if $\rho(a)\le1$.
\end{lemma}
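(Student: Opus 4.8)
The plan is to prove the two implications separately. The implication ``$\rho(a)\le 1\Rightarrow\deg f_{a,b}\le 1$'' is the easy one: if $\rho(a)=0$ then $f_{a,b}=\esVec_r(b)$ is constant, and if $\rho(a)=1$ with $a_{i_0}\neq 0$, then $Xa+b$ is independent of $X$ in every coordinate except the $i_0$-th, which is $Xa_{i_0}+b_{i_0}$; since $\esVec_r$ is multilinear (affine-linear in each variable separately), $f_{a,b}$ is then affine-linear in $X$, so $\deg f_{a,b}\le 1$.

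For the converse I would read off the coefficient of $X^2$ in $f_{a,b}$. Expanding $f_{a,b}(X)=\sum_{\abs I=r}\prod_{i\in I}(Xa_i+b_i)$ and using multilinearity of $\esVec_r$ (so that no single factor ever contributes $X^2$), this coefficient is a polynomial in $b_1\dts,b_n$, namely
\[ Q(b)=\sum_{1\le i<j\le n} a_i a_j\,\esVec_{r-2}\bigl((b_l)_{l\neq i,j}\bigr). \]
The hypothesis forces $Q\equiv 0$, and comparing, for each $(r-2)$-element set $K\subseteq\nums n$, the coefficients of the squarefree monomial $\prod_{l\in K}b_l$ on both sides gives $\esVec_2\bigl((a_l)_{l\notin K}\bigr)=0$. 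Equivalently, writing $s\df n-r+2$, one has $\esVec_2\bigl((a_l)_{l\in T}\bigr)=0$ for every $T\subseteq\nums n$ with $\abs T=s$; here $2<r<n$ enters precisely to guarantee $3\le s\le n-1$.

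The heart of the argument is then to deduce $\rho(a)\le 1$ from this family of quadratic relations, and this is the step I expect to require the most care. I would show that the vanishing propagates upward in the subset size: if $\esVec_2$ vanishes on all $t$-element subsets and $\abs{T^+}=t+1$, then, since each pair $\{l,l'\}\subseteq T^+$ lies in exactly $t-1$ of the sets $T^+\setminus\{u\}$,
\[ (t-1)\,\esVec_2\bigl((a_l)_{l\in T^+}\bigr)=\sum_{u\in T^+}\esVec_2\bigl((a_l)_{l\in T^+\setminus\{u\}}\bigr)=0, \]
and the scalar $t-1\ge s-1\ge 2$ is invertible. Inducting upward from $t=s$ yields $\esVec_2(a)=0$ and (using $s\le n-1$) $\esVec_2\bigl((a_l)_{l\neq i}\bigr)=0$ for every $i$. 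Combining these through the identity $\esVec_2(a)=\esVec_2\bigl((a_l)_{l\neq i}\bigr)+a_i(\sigma-a_i)$ with $\sigma\df\sum_l a_l$, I get $a_i(\sigma-a_i)=0$, i.e.\ $a_i\in\{0,\sigma\}$ for all $i$. If some $a_{i_0}\neq 0$, then $a_{i_0}=\sigma\neq 0$, every nonzero entry of $a$ equals $\sigma$, and hence $\sigma=\sum_l a_l=\rho(a)\,\sigma$, forcing $\rho(a)=1$.

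The main obstacle, then, is the combinatorial propagation step — in particular the bookkeeping that isolates the multiplicity $t-1$ and the observation that it is nonzero exactly in the range $3\le s\le n-1$; the rest is routine. It is worth noting in the writeup that only the $X^2$-coefficient of $f_{a,b}$ is needed (not the higher ones), and that the statement genuinely fails for $r=2$, where $s=n$ and the single relation $\esVec_2(a)=0$ does not pin down $\rho(a)$ — which is why the hypothesis $r>2$ cannot be dropped.
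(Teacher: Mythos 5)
Your proof is correct. The first half coincides in substance with the paper's: the easy direction is multilinearity, and the hard direction begins by extracting, from the vanishing of the $X^2$-coefficient of $f_{a,b}$ for all $b$, the relations $\esVec_2(a_T)=0$ for every $T\subseteq\nums n$ with $\abs T=n-r+2$; the paper obtains the same relations by substituting indicator vectors for $b$, which is equivalent to your comparison of squarefree $b$-monomials. Where you genuinely diverge is in deducing $\rho(a)\le 1$ from these relations. The paper splits into two cases: for $r\ge 4$ it subtracts $\esVec_2(a_1,a_J)-\esVec_2(a_2,a_J)$ to get $\esVec_1(a_J)=0$ for all $(n-r+1)$-subsets $J\subseteq\set{3\dts,n}$, compares these across $J$ to force $a_3\pts=a_n=0$, and finishes with $a_1a_2=0$; the case $r=3$ then needs a separate argument, which is essentially your endgame ($a_i(\sigma-a_i)=0$, hence $\sigma=\rho(a)\cdot\sigma$). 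Your averaging identity $(t-1)\cdot\esVec_2(a_{T^+})=\sum_{u\in T^+}\esVec_2(a_{T^+\setminus\set{u}})$ instead propagates the vanishing upward to all subsets of size at least $n-r+2$, in particular to the full set and to all $(n-1)$-subsets, so the $r=3$-style endgame applies uniformly for all $2<r<n$ and the case distinction disappears. The multiplicity count is right ($t-1\ge n-r+1\ge 2$, nonzero over $\HC$), and your closing remark correctly locates where the hypothesis $r>2$ is indispensable. The price of your route is the extra induction on subset size; what it buys is a single, shorter argument in place of two.
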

\begin{proof}  The \enquote{if} part is clear because $\esVec_r$ is multilinear. For the converse, observe
\begin{align*}
\esVec_r(aX+b) 
&= \sum_{\substack{I\subseteq\nums{n}\\\abs{I}=r}} \prod_{i\in I} (a_iX + b_i) 
= \sum_{\substack{I\subseteq\nums{n}\\\abs{I}=r}} 
  \sum_{s=0}^r \sum_{\substack{J\subseteq I \\\abs{J}=s}} \prod_{j\in J} b_j \prod_{i\in I\setminus J} a_iX 
\\
&= \sum_{s=0}^r \of{ \sum{\substack{J\subseteq I\subseteq\nums{n}\\\abs{I}=r,\abs{J}=s}}  \prod_{j\in J} b_j \prod_{i\in I\setminus J} a_i} X^{r-s},  
\end{align*}
So for any choice of $b\in\HC^n$ and for any $s\le r-2$, we must have 
\[ \sum_{\substack{J\subseteq I\subseteq\nums{n}\\\abs{I}=r,\abs{J}=s}}  \prod_{j\in J} b_j \prod_{i\in I\setminus J} a_i = 0. \] 

For any subset $J_0\subseteq\nums{n}$ of size $s$, choose the vector~$b$ which satisfies $b_j=1$ for $j\in J_0$ and $b_i=0$ for $i\notin J_0$. Then, we get
\[ \sum_{\substack{J_0\subseteq I\subseteq\nums{n}\\\abs{I}=r}} \prod_{i\in I\setminus J_0} a_i = 0.\] 
For $J_0=\set{n-s+1\dts,n}$, we have $\nums{n}\setminus J_0=\nums{n-s}$ and therefore
\[ 0 = \sum_{\substack{I\subseteq\nums{n}\\\abs{I}=r-s}} \prod_{i\in I} a_i = \esVec_{r-s}(a_1\dts,a_{n-s}). \] 
In general, $\esVec_{r-s}$ vanishes on any tuple of $n-s$ entries chosen from~$a$. We set up some notation for what follows: For any $I=\set{i_1\dts,i_k}\subseteq\nums{n}$ with $i_1\pts<i_k$, we write $a_I$ for the sequence $a_{i_1}\dts,a_{i_k}$. With this notation, we can say that
\[ \esVec_{r-s}(a_I)=0 ~\text{for any}~ I\subseteq\nums{n} 
 ~\text{with}~ \abs I = n-s. \]

For $s=r-2$, this means $\esVec_2(a_I)=0$ for any $I\subseteq\nums n$ with $n-r+2$ elements. If all entries of~$a$ were equal, this would imply $a=0$. Hence, up to permutation of the entries of $a$, we may assume $a_1\ne a_2$.
 Let $J\subseteq\set{3\dts,n}$ be a subset with $\abs{J}=n-r+1$. Then,
\begin{align*}
 0 &= \esVec_2(a_1,a_{J}) = a_1\cdot \esVec_1(a_{J}) + \esVec_2(a_{J}) \\
 0 &= \esVec_2(a_2,a_{J}) = a_2\cdot \esVec_1(a_{J}) + \esVec_2(a_{J}) \\
\intertext{yielding, by subtraction,}
 0 &= (a_1-a_2)\cdot \esVec_1(a_{J})
\end{align*} 
so $0=\esVec_1(a_{J})=\sum_{j\in J} a_j$.

We now finish the proof in the case $r\ge 4$. In this case, let $p>q>2$ be two indices. Since $r\ge 4$, we can choose two sets $J_q,J_p\subseteq\set{3\dts,n}$ of size $n-r+1$ such that $q\in J_q$, $p\in J_p$ and $J_q\setminus\set{q}=J_p\setminus\set{p}$. Then,
\begin{align*}
\sum\nolimits_{i\in J_q} a_i &= 0, \\
\sum\nolimits_{i\in J_p} a_i &= 0.
\end{align*}
Subtracting both equalities yields $a_q=a_p$. Since~$p$ and~$q$ were arbitrary in $\set{3\dts,n}$, this implies $c\df a_3\pts=a_n$. Hence, $0=\esVec_1(c\dts,c)$, so $a_i=0$ for all $i>3$. We are done in this case because
\[ 0 = \esVec_2(a_1\dts,a_{n-r+2})=\esVec_2(a_1,a_2)=a_1a_2. \]

We are left to treat the case $r=3$. In this case, $J=\nums{3,n}$. Define the sets $K_i\df\nums{n}\setminus\set{i}$. Since $\abs{K_i}=n-1=n-r+2$, we have $\esVec_2(a_{K_i})=0$ for all $i$. Thus, 
\begin{align}
\nonumber \esVec_2(a_1\dts,a_n) &= a_i\cdot\esVec_1(a_{K_i}) + \esVec_2(a_{K_i})
 \\ \label{stab:zerolater} &= a_i\cdot\esVec_1(a_{K_i}). 
\intertext{Summing over $i$, we obtain}
\nonumber n\cdot\esVec_2(a_1\dts,a_n) 
 &= \sum_{i=1}^n  a_i\cdot\esVec_1(a_{K_i}) = \sum_{i=1}^n \sum_{j\ne i} a_ia_j \\ \nonumber &= 2\cdot\esVec_2(a_1\dts,a_n).
\end{align}
Since $n\ge r\ge 3$, we have $n-2\ne 0$, therefore $\esVec_2(a_1\dts,a_n)=0$. By~\cref{stab:zerolater}, this means that $a_i\cdot\esVec_1(a_{K_i})=0$ for all $i\in\nums n$. Set $\alpha\df\sum_{i=1}^n a_i$, then we have 
\[ a_i\cdot \alpha = a_i \cdot \sum_{i=1}^n a_i = a_i^2 + a_i\cdot \esVec_1(a_{K_i}) = a_i^2. \]
Consequently, $a_i(\alpha-a_i)=0$ for all $i$. Given $a_i\ne0$, we get $a_i=\alpha$, so 
\[ \alpha=\sum_{1\le i\le n} a_i = \sum_{\substack{1\le i\le n\\ a_i\ne 0}} a_i
 = \rho(a)\cdot\alpha,
 \]
which means $\rho(a)=1$ unless $\alpha=0$, in which case $a=0$. 
\end{proof}

\begin{lemma} \label{stab:rank1preserved} Let $2<r<n$.
If $g\in\GL_n$ stabilizes the $r$-th elementary symmetric polynomial and $a\in\HC^n$ satisfies $\rho(a)=1$, then $\rho(g(a))=1$. 
\end{lemma}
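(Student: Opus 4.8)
The plan is to deduce this from \cref{stab:ranklemma} by transporting the relevant polynomial identity along $g$, using that $g$ is linear, bijective, and stabilizes $\esVec_r$. First a cheap observation disposes of the lower bound: since $g\in\GL_n$ is invertible, $g(a)=0$ would force $a=0$, i.e.\ $\rho(a)=0$; as $\rho(a)=1$ by hypothesis, we get $g(a)\neq 0$ and hence $\rho(g(a))\geq 1$. So it suffices to prove $\rho(g(a))\leq 1$, and by the \enquote{only if} direction of \cref{stab:ranklemma} this is equivalent to showing $\deg f_{g(a),c}\leq 1$ for every $c\in\HC^n$.

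So fix $c\in\HC^n$. Since $g$ is surjective, choose $b\in\HC^n$ with $c=g(b)$. Linearity of $g$ gives $Xg(a)+c=g(Xa+b)$ for every scalar $X$, and therefore
\begin{align*}
 f_{g(a),c}(X) &= \esVec_r\bigl(Xg(a)+c\bigr) = \esVec_r\bigl(g(Xa+b)\bigr) \\
 &= (\esVec_r\circ g)(Xa+b) = \esVec_r(Xa+b) = f_{a,b}(X),
\end{align*}
where the penultimate equality is exactly the assumption $\esVec_r\circ g=\esVec_r$. Because $\rho(a)=1\leq 1$, the \enquote{if} direction of \cref{stab:ranklemma} yields $\deg f_{a,b}\leq 1$, hence $\deg f_{g(a),c}\leq 1$. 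As $c\in\HC^n$ was arbitrary, \cref{stab:ranklemma} now gives $\rho(g(a))\leq 1$, and together with $\rho(g(a))\geq 1$ we conclude $\rho(g(a))=1$.

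I do not expect a genuine obstacle here; the argument is a two-line reduction once \cref{stab:ranklemma} is in hand. The only points requiring a little care are that \cref{stab:ranklemma} is an equivalence and both directions are used — the \enquote{if} direction to bound $\deg f_{a,b}$, the \enquote{only if} direction to convert the family of degree bounds for $f_{g(a),c}$ back into the rank statement for $g(a)$ — and the small remark that invertibility of $g$ is what rules out the degenerate possibility $\rho(g(a))=0$. Bijectivity of $g$ is also what makes \enquote{$c$ ranges over $\HC^n$} equivalent to \enquote{$b$ ranges over $\HC^n$}, so that quantifying over all $b$ in \cref{stab:ranklemma} suffices.
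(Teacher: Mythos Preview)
Your proof is correct and essentially identical to the paper's: both establish $f_{g(a),c}=f_{a,g^{-1}(c)}$ via linearity and the stabilizer hypothesis, then invoke both directions of \cref{stab:ranklemma} together with invertibility of $g$ to rule out $\rho(g(a))=0$. The only cosmetic difference is that the paper parametrizes directly by $b$ and writes $f_{a,g^{-1}(b)}$, whereas you set $c=g(b)$ first.
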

\begin{proof} For any $b\in\HC^n$, we have $\deg(f_{a,g^{-1}(b)})\le 1$ by \cref{stab:ranklemma}. Since 
\begin{align*}
 f_{g(a),b} &=\esVec_r(X\cdot g(a) + b)=\esVec_r(g(Xa+g^{-1}(b)))
\\ &=\esVec_r(Xa+g^{-1}(b))=f_{a,g^{-1}(b)},
\end{align*}
we have $\deg(f_{g(a),b})\le 1$ for all $b\in\HC^n$ and again by \cref{stab:ranklemma} this implies $\rho(g(a))\le 1$. Since~$g$ is invertible, $a\ne0$ implies $g(a)\ne0$, so  $\rho(g(a))=1$. 
\end{proof}

\begin{proof}[Proof of \cref{stab}] Let $g\in H_r$, i.e., $g$ stabilizes $\esVec_r$. Let $e_i\in\HC^n$ be the $i$-th canonical base vector, then \cref{stab:rank1preserved} implies that $g(e_i)$ is a vector with only one nonzero entry. In other words, each column of~$g$ contains only one nonzero entry. As~$g$ is invertible, it must be the product of a permutation matrix $\pi$ with an invertible diagonal matrix $t$. We are left to show that~$t$ must be a scalar matrix corresponding to some $r$-th root of unity.

Let $t=\diag(t_1\dts,t_n)$ with $t_i\in\HC^\times$. By assumption, we have
\begin{align*}
\sum_{\substack{I\subseteq\nums n\\\abs I=r}} \prod_{i\in I} X_i &= \esVec_r(X_1\dts,X_n) = \esVec_r(t_1X_1\dts,t_nX_n) 
= \sum_{\substack{I\subseteq\nums n\\\abs I=r}} \prod_{i\in I} t_iX_i
\end{align*}
and comparing coefficients, this means that $t_{i_1}\cdots t_{i_r}=1$ for all sequences of indices $1\le i_1\dts<i_r\le n$. We first use this to show that all the $t_i$ are equal. Let $i,j\in\nums n$ be two distinct indices and arbitrarily chose $r-1$ other indices \mbox{$i_1\dts,i_{r-1}\in\nums n\setminus\set{i,j}$.} Then, the above implies
$\omega\df t_i = \prod_{k=1}^{r-1} t_{i_k}^{-1} = t_j$. This means that~$t$ is a scalar matrix corresponding to $\omega\in\HC^\times$ and we have \mbox{$\omega^r=t_1\cdots t_r=1$,} so $\omega$ is an $r$-th root of unity.
\end{proof}

\begin{remark} Note that $\esVec_r$ is in general not characterized by its stabilizer, i.e., it is not the only homogeneous degree~$r$ polynomial with this stabilizer. To se this, let us assume $n>r>3$ and consider $P\df\esVec_1\cdot\esVec_{r-1}$. We claim that $H_r$ is also the stabilizer of $P$. Assume that $P$ is left invariant by some transformation~$g$. We can write 
\[ \esVec_1\cdot\esVec_{r-1} = P=P\circ g = (\esVec_1\circ g)\cdot(\esVec_{r-1}\circ g). \]
Here, $\esVec_1$ and $\esVec_{r-1}$ are the irreducible factors of $P$ and have distinct degrees, so they are unique up to scalar. More precisely, $\esVec_1\circ g=\alpha\cdot\esVec_1$ and $\esVec_{r-1}\circ g = \alpha^{-1}\cdot\esVec_{r-1}$ for some $\alpha\in\HC^\times$. If $\omega\in\HC^\times$ is such that $\omega^{r-1}=\alpha$, then $\omega g \in H_{r-1}$. By possibly multiplying $\omega$ with an $(r-1)$-st root of unity, \cref{stab} implies that $\omega g$ is a permutation matrix. To see that $g\in H_r$, we need to show that $\omega$ is an $r$-th root of unity. Indeed, $\esVec_1 = \esVec_1\circ\omega g =\omega\alpha\cdot\esVec_1=\omega^r\cdot\esVec_1$.
\end{remark}

\section{Weights of the Coordinate Ring}

Let $V_r\df\HC[X_1\dts,X_n]_r$ be the vector space of homogeneous degree $r$ polynomials in $n$ variables and $\Omega_r \df \esVec_r \circ \GL_n = \cset{ \esVec_r \circ g }{ g\in\GL_n }$ the $\GL_n$-orbit of $\esVec_r$. The Zariski closure $\overline\Omega_r\subseteq V_r$ is a variety on which $\GL_n$ acts from the right. Therefore, the coordinate ring $\HC[\overline\Omega_r]$ decomposes into irreducible \mbox{$\GL_n$-modules}, each corresponding to a certain \emph{dominant weight}. It is a well-known fact that these weights form a semigroup \cite[4.3.5 Theorem]{CHRISDIS}.

We recall several facts about the representation theory of $\GL_n$, see \cite{HUMPHREYS,KRAFT}. The irreducible representations of $\GL_n$ are classified by the semigroup 
\[ \Lambda \df \cset{ \lambda\in\HZ^n }{ \lambda_1\pts\ge\lambda_n } \]
of \emph{dominant weights} and we dentoe by $\module(\lambda)$ the irreducible $\GL_n$-module corresponding to the weight $\lambda\in\Lambda$. Let $S_r\df\cset{\lambda\in\Lambda}{ \HV(\lambda)\subseteq\HC[\overline\Omega_r]}$ be the semigroup of weights that appear in the coordinate ring of the orbit closure of~$\esVec_r$. We will show the following:

\begin{theorem} \label{group}  The group generated by $S_r$ is equal to $\cset{ \lambda\in\Lambda }{ \lambda_1\pts+\lambda_n \in r\HZ }$.
\end{theorem}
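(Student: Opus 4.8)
The plan is to prove the asserted equality by two inclusions. For the inclusion of the group generated by $S_r$ in $\cset{\lambda\in\Lambda}{\lambda_1\pts+\lambda_n\in r\HZ}$, note that $\Omega_r$ is stable under rescaling, since $c\cdot(\esVec_r\circ g)=\esVec_r\circ(c^{1/r}g)$ for $c\in\HC^\times$. Thus $\overline\Omega_r$ is an affine cone, $\HC[\overline\Omega_r]$ is graded, and on the degree-$d$ component the rescaling action agrees with that of the central torus $\HC^\times\cdot I_n\subseteq\GL_n$, which acts there by the character $c\mapsto c^{\pm rd}$. Hence every weight $\lambda$ occurring in $\HC[\overline\Omega_r]$ has $\lambda_1\pts+\lambda_n\in r\HZ$, and so does every element of the group they generate.

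For the reverse inclusion I would first pass from the orbit closure to the orbit, using the following general fact: if a reductive group acts on an irreducible affine variety $Z$ and $Z_0\subseteq Z$ is a dense open stable subset, then the weights occurring in $\HC[Z]$ and those occurring in $\HC[Z_0]$ generate the same subgroup of the weight lattice. Indeed $\HC[Z]\subseteq\HC[Z_0]$ gives one inclusion, and for the other: if $v\in\HC[Z_0]\subseteq\HC(Z)$ is a highest weight vector of weight $\mu$, its ideal of denominators $\cset{a\in\HC[Z]}{av\in\HC[Z]}$ is a nonzero Borel-stable ideal of $\HC[Z]$, hence contains a highest weight vector $a$ of some weight $\nu$, and then $av\in\HC[Z]$ is a highest weight vector of weight $\mu+\nu$, so $\mu=(\mu+\nu)-\nu$ lies in the group generated by the weights of $\HC[Z]$. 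Applying this to $Z=\overline\Omega_r$ and $Z_0=\Omega_r$ (open in its closure, being an orbit) reduces the task to $\HC[\Omega_r]$. By \cref{stab}, $H_r\cong\kS_n\rtimes\HZ_r$ is finite and $\HZ_r$ is central, so $\Omega_r\cong\GL_n/H_r$ and $\HC[\Omega_r]=\HC[\GL_n]^{H_r}$; by Peter--Weyl and Frobenius reciprocity, $\module(\mu)$ occurs in $\HC[\Omega_r]$ if and only if $\module(\mu)^{H_r}\neq 0$. Since $\omega I_n\in\HZ_r$ acts on $\module(\mu)$ by the scalar $\omega^{\mu_1\pts+\mu_n}$, this holds exactly when $r\mid\mu_1\pts+\mu_n$ and $\module(\mu)^{\kS_n}\neq 0$, the restriction being to the subgroup $\kS_n\subseteq\GL_n$ of permutation matrices. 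So it remains to show that the dominant weights $\mu$ with $r\mid\mu_1\pts+\mu_n$ and $\module(\mu)^{\kS_n}\neq 0$ generate $\cset{\lambda\in\Lambda}{\lambda_1\pts+\lambda_n\in r\HZ}$.

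Here I would exhibit explicit such weights. The weight $(r,0\dts,0)$ qualifies: $\module(r,0\dts,0)=\Sym^r(\HC^n)$ contains the $\kS_n$-invariant $\esVec_r$. This handles the congruence, leaving the root lattice $\cset{\lambda\in\HZ^n}{\lambda_1\pts+\lambda_n=0}$, which is generated by $e_1-e_i$ for $i=2\dts,n$. Fixing such an $i$ and choosing $N$ large with $N\equiv 2-i\pmod r$, the partitions $(N,1^{i-2},0^{n-i+1})$ and $(N-1,1^{i-1},0^{n-i})$ both have coordinate sum $N+i-2\in r\HZ$ and differ exactly by $e_1-e_i$. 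So it is enough to prove that $\module(N,1^{k},0^{n-1-k})=\HS_{(N,1^k)}(\HC^n)$ has a nonzero $\kS_n$-invariant for every $0\le k\le n-1$ and all sufficiently large $N$.

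For $k=0$ this is the nonzero space of degree-$N$ symmetric polynomials, and for $k=n-1$ one has $\HS_{(N,1^{n-1})}(\HC^n)=\Sym^{N-1}(\HC^n)\otimes\det$, whose invariants are the alternating polynomials of degree $N-1$, nonzero once $N-1\ge\binom n2$. For intermediate $k$ I would use the hook case of the Jacobi--Trudi identity, $[\HS_{(N,1^k)}(\HC^n)]=\sum_{i=0}^{k}(-1)^i[\Sym^{N+i}(\HC^n)]\cdot[\Lambda^{k-i}(\HC^n)]$ in the representation ring of $\GL_n$, apply the $\HZ$-linear operation $W\mapsto\dim W^{\kS_n}$, and use $\of{\Sym^{m}(\HC^n)\otimes\Lambda^{\ell}(\HC^n)}^{\kS_n}=\Hom_{\kS_n}\of{\Lambda^{\ell}\HC^n,\HC[z_1\dts,z_n]_m}$ to express $\dim\HS_{(N,1^k)}(\HC^n)^{\kS_n}$ as an explicit alternating sum of such Hom-dimensions. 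By the coinvariant-algebra factorization $\HC[z_1\dts,z_n]\cong\HC[e_1\dts,e_n]\otimes\clH$ with $\clH\cong\HC\kS_n$ as an $\kS_n$-module, the multiplicity of any fixed $\kS_n$-irreducible in $\HC[z_1\dts,z_n]_m$ is a quasi-polynomial of degree $n-1$ in $m$ with leading coefficient proportional to the dimension of that irreducible; combined with the identity $\sum_{\ell=0}^k(-1)^\ell\binom n\ell=(-1)^k\binom{n-1}{k}$, the leading terms of the alternating sum combine to $\tfrac{1}{(n-1)!\,n!}\binom{n-1}{k}N^{n-1}$, which is strictly positive for $0\le k\le n-1$; hence the dimension is positive for $N\gg0$. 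This asymptotic lower bound for the hook multiplicities is the step that requires genuine care; everything preceding it is formal.
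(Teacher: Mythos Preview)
Your argument is correct, and in one respect more complete than the paper's: you explicitly bridge $\HC[\overline\Omega_r]$ and $\HC[\Omega_r]$ via the Borel-stable denominator-ideal trick, whereas the paper states its Proposition only for the open orbit and simply asserts that \cref{group} follows. Where the two genuinely diverge is in how $H_r$-invariants are produced in hook modules. Both reduce to hooks, but the paper bypasses Jacobi--Trudi, the coinvariant algebra, and asymptotics entirely. For each $1\le i\le n-1$ it takes the hook $\lambda_i=(\ell_i,1^i,0^{n-1-i})$ with $\ell_i=r\cdot\tfrac{n(n+1)}{2}-i$ and writes down \emph{one} semistandard tableau of that shape containing the entry~$k$ exactly $kr$ times; this is a weight vector of weight $(r,2r,\ldots,nr)$, hence already $\HZ_r$-invariant, and its $\kS_n$-symmetrization is nonzero because the translates have pairwise distinct weights and are therefore linearly independent. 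Together with $(r,0,\ldots,0)\leftrightarrow\Sym^r\HC^n$ (which you also use), these hooks give an explicit triangular generating set for the target lattice. Your asymptotic hook-multiplicity computation is valid, but the paper's construction is a two-line argument that produces concrete values of the first part rather than ``sufficiently large'' ones---at the cost of not addressing the orbit-versus-closure passage that you handle carefully.
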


\subsection{Representation Theory} We require some more prerequisites from representation theory. A basis of $\module(\lambda)$ is given by the semistandard Young-tableaux~$Y$ of shape $\lambda$. For $\lambda=(4,2,1,1)$ and $n=4$, an example for such a tableaux is
\[ 
\ytableausetup{aligntableaux=center}
Y = \begin{ytableau}
1&2&2&4\\
2&3\\
3 \\
4
\end{ytableau}
\]

\medskip In each row, the numbers are weakly ascending and in each column, they are strictly ascending. Let $\weight(Y)_k\in\HN$ be the number of times that $k\in\HN$ appears in~$Y$. In the above example, $\weight(Y)=(1,3,2,2)$. We then call $\weight(Y)$ the \emph{weight} of~$Y$.
Given a tuple $\alpha=(\alpha_1\dts,\alpha_n)\in\HN^n$ of natural numbers, we call $v\in\module(\lambda)$ a weight vector of weight $\alpha$ if all invertible diagonal matrices $t=\diag(t_1\dts,t_n)\in\GL_n$ act on~$v$ via 
$t.v=t_1^{\alpha_1}\cdots t_n^{\alpha_n}\cdot v$.
Any semistandard Young tableaux~$Y$ is a weight vector of weight $\weight(Y)$. A permutation matrix corresponding to a permutation $\pi\in\kS_n$ will map a weight vector of weight $\alpha=(\alpha_1\dts,\alpha_n)\in\HN^n$ to a weight vector of weight $\pi.\alpha = (\alpha_{\pi(1)}\dts,\alpha_{\pi(n)})$. 


For $\lambda=(\lambda_1\dts,\lambda_n)\in\Lambda$, we set $\lambda^\ast\df(-\lambda_n\dts,-\lambda_1)$. Clearly $\lambda^\ast\in\Lambda$, in fact $\lambda\mapsto\lambda^\ast$ is an automorphism of $\Lambda$. We quote the following, well-known fact about the dual of a representation from \cite[III.1.4, Bemerkung~2]{KRAFT}:

\begin{lemma} \label{StarDual} For any $\lambda\in\Lambda$,  we have $\module(\lambda)^\ast=\module(\lambda^\ast)$.  \qed 
\end{lemma}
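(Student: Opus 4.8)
The statement to prove is the identification of the dual of the irreducible $\GL_n$-module $\module(\lambda)$ with $\module(\lambda^\ast)$, where $\lambda^\ast=(-\lambda_n\dts,-\lambda_1)$. Since this is quoted in the excerpt as a well-known fact (with a reference to \cite{KRAFT}), the natural plan is to give the standard highest-weight-theory argument rather than anything ad hoc. The guiding principle is that an irreducible $\GL_n$-module is determined up to isomorphism by its highest weight, so I only need to compute the highest weight of the dual module $\module(\lambda)^\ast$ and check that it equals $\lambda^\ast$.

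First I would recall the setup: fix the Borel subgroup of upper-triangular matrices and the maximal torus of diagonal matrices, so that dominant weights are exactly the elements of $\Lambda$, ordered by $\lambda_1\pts\ge\lambda_n$. The weights of $\module(\lambda)$ (counted with multiplicity) are precisely the weights $\weight(Y)$ of the semistandard Young tableaux $Y$ of shape $\lambda$, and the highest weight among them is $\lambda$ itself. The key representation-theoretic input is that for a finite-dimensional representation, the set of weights of the dual module $\module(\lambda)^\ast$ is the negation of the set of weights of $\module(\lambda)$: if $t=\diag(t_1\dts,t_n)$ acts on a weight vector $v$ of weight $\alpha$ by $t.v=t_1^{\alpha_1}\cdots t_n^{\alpha_n}\cdot v$, then the contragredient action on the dual basis vector scales it by the inverse character, i.e.\ by weight $-\alpha$. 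Hence the weight multiset of $\module(\lambda)^\ast$ is $\cset{-\alpha}{\alpha~\text{a weight of}~\module(\lambda)}$.

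Next I would extract the highest weight of $\module(\lambda)^\ast$ from this negated weight set. The lowest weight of $\module(\lambda)$ is obtained from $\lambda$ by applying the longest element $w_0$ of the Weyl group $\kS_n$, which reverses coordinates: $w_0.\lambda=(\lambda_n\dts,\lambda_1)$. Negating this lowest weight gives the highest weight of the dual, namely $-w_0.\lambda=(-\lambda_n\dts,-\lambda_1)=\lambda^\ast$, which is indeed dominant since $\lambda_1\pts\ge\lambda_n$ forces $-\lambda_n\pts\ge-\lambda_1$. Because $\module(\lambda)^\ast$ is again irreducible (dualizing preserves irreducibility for finite-dimensional modules) and an irreducible module is determined by its highest weight, this forces $\module(\lambda)^\ast\cong\module(\lambda^\ast)$.

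\textbf{Main obstacle.}
The only genuinely delicate point is justifying that $\lambda^\ast$ really is the \emph{highest} weight of the dual and not merely \emph{a} weight: one must argue that negating the weight poset reverses the dominance order, so that the minimal weight of $\module(\lambda)$ (the one obtained via $w_0$) becomes the maximal weight of $\module(\lambda)^\ast$. Concretely this requires knowing that $\module(\lambda)$ has a unique lowest weight $w_0.\lambda$ with multiplicity one, which follows from the same highest-weight theory applied with the opposite Borel, or directly from the tableau description by checking which $\weight(Y)$ is minimal in dominance order. Everything else is the routine dictionary between the contragredient action and negation of characters; the argument is short precisely because we are allowed to invoke standard structure theory of $\GL_n$-representations.
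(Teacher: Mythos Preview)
Your argument is correct and is precisely the standard highest-weight computation one finds in the references; the paper itself gives no proof at all, merely quoting the statement from \cite[III.1.4, Bemerkung~2]{KRAFT} and marking it with a \qedsymbol. So you have supplied exactly the justification the paper chose to omit, and there is nothing to compare against.
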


\Cref{group} now follows from the following, slightly more general observation together with \cref{stab}.

\begin{proposition} Let~$V$ be a $\GL_n$-module, $2<r<n$ and $v\in V$ any element whose stabilizer is equal to $H_r\subseteq\GL_n$. Let $\Omega\df\GL_n.v$ be its orbit. Then, the weights that appear in the coordinate ring $\HC[\Omega]$ generate the lattice $\cset{\lambda\in\Lambda}{\lambda_1\pts+\lambda_n\in r\HZ}$.
\end{proposition}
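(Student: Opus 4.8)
The plan is to analyze the coordinate ring $\HC[\Omega]$ of the orbit $\Omega=\GL_n.v\cong\GL_n/H_r$ and extract its weight semigroup from the structure of $H_r$. Since $\Omega$ is a quasi-affine homogeneous space, by a standard argument (Frobenius reciprocity / the algebraic Peter--Weyl theorem) the multiplicity of $\module(\lambda)$ in $\HC[\GL_n/H_r]$ equals $\dim\module(\lambda)^{H_r}$ (using left-invariants under $H_r$ inside $\module(\lambda^\ast)$, with \cref{StarDual} letting us pass freely between $\lambda$ and $\lambda^\ast$). So the weight semigroup $S$ is exactly $\cset{\lambda\in\Lambda}{\module(\lambda)^{H_r}\ne 0}$, and I must show the group it generates is $L\df\cset{\lambda\in\Lambda}{\lambda_1+\cdots+\lambda_n\in r\HZ}$.

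For the inclusion $\gen{S}\subseteq L$: if $\module(\lambda)$ has a nonzero $H_r$-fixed vector, then in particular it has a nonzero vector fixed by the scalar matrices $\omega I$ with $\omega^r=1$. Such a scalar acts on $\module(\lambda)$ by the character $\omega\mapsto\omega^{|\lambda|}$ where $|\lambda|=\lambda_1+\cdots+\lambda_n$; for a fixed vector to exist we need $\omega^{|\lambda|}=1$ for all $r$-th roots of unity $\omega$, i.e. $r\mid|\lambda|$. Hence $S\subseteq L$, and since $L$ is already a group, $\gen{S}\subseteq L$.

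For the reverse inclusion I would exhibit enough weights in $S$ to generate $L$. First, $L$ is generated by the $e_i-e_j$ (for $i<j$) together with any single dominant weight of total degree $r$, for instance $\varpi\df(1,\dots,1,0,\dots,0)$ with $r$ ones, i.e. $\esVec_r$-type weight. So it suffices to produce, for each such generator $\mu$, some $\lambda\in S$ with $\lambda\equiv\mu$ modulo the sublattice already known to lie in $\gen{S}$; concretely it is enough to show $S$ contains $\varpi$ (degree $r$) and, for every $1\le k\le n-1$, some weight whose restriction to $H_r$ has a fixed vector and which together with $\varpi$ generates all of $L$. The cleanest route: show that for all $\lambda\in L$ that are \emph{sufficiently dominant} (e.g. $\lambda_i-\lambda_{i+1}$ large for all $i$), $\module(\lambda)^{H_r}\ne 0$. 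To do this, decompose $\module(\lambda)|_{\kS_n}$ using the weight-basis of semistandard tableaux: a vector fixed by the torus-part $t$ of $H_r$ must be a combination of tableaux $Y$ whose weight $\weight(Y)$ is constant (since $H_r\supseteq\HZ_r$ acts by $\omega^{|\lambda|}=1$, this is automatic) — more precisely, sum over an $\kS_n$-orbit of weights; then averaging over $\kS_n$ a tableau of weight $(\lambda_1,\dots,\lambda_n)$ reordered, which is the highest weight vector, gives an $\kS_n$-invariant, hence $H_r$-invariant, nonzero vector provided the symmetrization does not vanish — and for strictly dominant $\lambda$ the highest weight vector has a \emph{regular} torus weight (all entries distinct), so its $\kS_n$-orbit consists of $n!$ distinct weight vectors and the symmetrization is manifestly nonzero. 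This shows every sufficiently regular $\lambda\in L$ lies in $S$; differences of such weights then generate all of $L$.

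The main obstacle is the last step: making precise that "enough" regular dominant $\lambda\in L$ lie in $S$ and that these generate $L$ as a group. The subtlety is that $H_r=\kS_n\rtimes\HZ_r$ is not connected, so I cannot just quote a statement about Borel-fixed vectors; I genuinely need the tableau/weight-basis computation to see that the $\kS_n$-average of the highest weight vector survives. Once I have, say, all $\lambda\in L$ with $\lambda_1>\lambda_2>\cdots>\lambda_n$ lying in $S$, I finish by noting that the $\HZ$-span of this set is $L$ (two such weights differing by $e_i-e_j$ exist, and one has total degree $\equiv 0$, actually I can arrange total degree exactly $r$ or any multiple), so $\gen{S}=L$ as claimed; combined with \cref{stab}, which identifies the stabilizer of $\esVec_r$ with $H_r$, this yields \cref{group}.
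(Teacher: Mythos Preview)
Your argument is correct and follows essentially the same route as the paper: Peter--Weyl to reduce to computing $\module(\lambda)^{H_r}$, the scalar action of $\HZ_r$ for the inclusion $\gen S\subseteq L$, and for the reverse inclusion the key observation that symmetrizing over $\kS_n$ a weight vector whose torus weight has pairwise distinct entries yields a nonzero $H_r$-invariant (once $r\mid|\lambda|$). The only difference is cosmetic: the paper picks out a finite list of explicit shapes $(r,0,\dots,0)$ and $(\ell_i,1,\dots,1,0,\dots,0)$, builds in each a tableau of weight $(r,2r,\dots,nr)$, symmetrizes, and then checks via an explicit upper-triangular matrix that these $n$ weights span $L$ over $\HZ$; you instead symmetrize the highest weight vector for \emph{every} strictly dominant $\lambda\in L$ and observe that such $\lambda$ already span $L$. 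Your variant is arguably tidier, since it avoids the bespoke tableau construction and the matrix computation, at the cost of leaving the final ``strictly dominant weights in $L$ span $L$'' step a bit implicit (any $\mu\in L$ is a difference $(\mu+rN\rho)-rN\rho$ of two such weights for $N\gg0$, where $\rho=(n-1,\dots,1,0)$).
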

\begin{proof} We know by the algebraic Peter-Weyl Theorem  \cite[27.3.9]{TAUVELYU} that there is an isomorphism of $\GL_n\times\GL_n$-modules
\[
\HC[\GL_n]\cong\bigoplus_{\lambda\in\Lambda} \module(\lambda)^\ast\otimes\module(\lambda).
\]
This isomorphism is given by sending a tensor $\varphi\otimes v\in\module(\lambda)^\ast\otimes\module(\lambda)$ to the regular fuction $g\mapsto \varphi(g.v)$, hence the right action on $\HC[\GL_n]$ by $\GL_n$ corresponds to the action on the right tensor factor. There is also a $\GL_n$-module isomorphism $\HC[\GL_n.v]\cong\HC[\GL_n]^{H_r}$  where invariants are taken with respect to the right action \cite[25.4.7 and 25.5.2]{TAUVELYU}. Thus,
\[ \HC[\GL_n.v] \cong \bigoplus_{\lambda\in\Lambda} \module(\lambda)^\ast\otimes\module(\lambda)^{H_r}. \]
Set $\Lambda_r\df\cset{\lambda\in\Lambda}{\lambda_1\pts+\lambda_r\in r\HZ}$. By the above formula and \cref{StarDual}, we want to show that $\Lambda_r$ is generated by all $\lambda^\ast$ with $\module(\lambda)^{H_r}\ne\set0$. Since the idempotent map $\lambda\mapsto\lambda^\ast$ restricts to an automorphism of $\Lambda_r$, it suffices to show that $\Lambda_r$ is equal to the lattice $\tilde\Lambda_r$ generated by all $\lambda\in\Lambda$ with $\module(\lambda)^{H_r}\ne\set0$.

First, we show that $\tilde\Lambda_r\subseteq\Lambda_r$. Assume that $\module(\lambda)^{H_r}\ne\set0$, then any scalar matrix $t\in\HC$ acts on $\module(\lambda)$ by $t^{\lambda_1\pts+\lambda_n}$. Since the scalar matrices that correspond to $r$-th roots of unity are all contained in~$H_r$ and~$H_r$ stabilizes some vector in $\module(\lambda)$, we may conclude that $\lambda_1\pts+\lambda_n$ is divisible by $r$, hence $\lambda\in\Lambda_r$. Consequently, $\tilde\Lambda_r\subseteq\Lambda_r$.

We now prove the other inclusion and first claim that $(r,0\dts,0)\in\tilde\Lambda_r$. The module corresponding to this partition is the $r$-th symmetric power $\Sym^r\HC^n$. Denoting by $e_i\in\HC^n$ the $i$-th canonical base vector, we can see that its $r$-th symmetric power $e_i^r\in\Sym^r\HC^n$ is a nonzero vector which is invariant under all permutations and also invariant under scaling by $r$-th roots of unity, hence $e_i^r$ is $H_r$-invariant.

For the second step, let $\ell_i\df r\cdot\frac{n(n+1)}2 - i$. We claim that for $1\le i \le n-1$, the partitions
\[ \lambda_i\df(\ell_i,\underset{i~\text{times}}{\underbracket{1\dts,1}},0\dts,0) \]
are all contained in $\tilde\Lambda_r$. Indeed, we consider the semistandard Young tableaux of shape $\lambda_i$ that contains the number~$k$ precisely $kr$ many times, filling in the values from left to right, top to bottom. This corresponds to a nonzero weight vector~$w_i$ of weight $(r,2r,3r\dts,nr)=r\cdot(1,2\dts,n)$ which is therefore invariant under $r$-th roots of unity. 
If we apply any permutation $\pi\in\kS_n$, then we obtain a weight vector $\pi.w_i$ of weight $r\cdot(\pi(1)\dts,\pi(n))$ which is still invariant under $r$-th roots of unity. Since these weights are all pairwise distinct, the vectors $\pi.w_i$ are linearly independent and the symmetrization
\[ \rsdw_i\df\sum_{\pi\in\kS_n} \pi.w_i \]
is a nonzero $H_r$-invariant. This proves $\module(\lambda_i)^{H_r}\ne\set0$, therefore $\lambda_i\in\tilde\Lambda_r$. 

We have shown that $\tilde\Lambda_r$ contains any $\HZ$-linear combination of the columns of the following matrix:
\[
\mx[c]{ 
r & \ell_1 & \cdots & \ell_{n-1} \\
0 & 1 & \cdots  & 1 \\
\vdots & \ddots  & \ddots & \vdots  \\
0 & \cdots & 0 &  1 } \]
To show $\Lambda_r\subseteq\tilde\Lambda_r$, we are left to verify that any $\lambda\in\Lambda_r$ is a $\HZ$-linear combination of the columns of $A$. To see this, we subtract apropriate multiples of the last $n-1$ columns from $\lambda$ to eliminate all but the first coordinate. The resulting vector $\mu=(a,0\dts,0)$ is an element of $\tilde\Lambda_r$ because all columns of $A$ are in $\tilde\Lambda_r$. This implies that $a$ must be divisible by $r$ and so $\mu$ is an integer multiple of the first column of $A$. Thus, $\lambda$ is a $\HZ$-linear combination of the columns of $A$.
\end{proof}

\raggedright
\bibliography{esym}{}
\bibliographystyle{alpha}
\end{document}